\newtheorem{thm}{Theorem}
\newtheorem{lem}{Lemma}[section]
\theoremstyle{definition}
\newtheorem{rem}{Remark}[section]
\numberwithin{equation}{section}
\newcommand{\eps}{\varepsilon}
\newcommand{\be}{\begin{equation} \label}
\newcommand{\ee}{\end{equation}}
\newcommand{\bas}{\begin{eqnarray*}}
\newcommand{\eas}{\end{eqnarray*}}
\newcommand{\R}{\mathbb{R}}
\newcommand{\Rn}{\mathbb{R}^n}
\begin{document}

\title[refined blowup estimates]{On refined blowup estimates for the\\ exponential reaction-diffusion equation}

\author{Philippe Souplet}
\address{Universit\'e Paris 13, Sorbonne Paris Cit\'e, CNRS UMR 7539,
Laboratoire Analyse G\'eom\'etrie et Applications
93430 Villetaneuse, France.}
\email{souplet@math.univ-paris13.fr}

\begin{abstract}
We consider radial decreasing solutions
of the semilinear heat equation with exponential nonlinearity.
We provide a relatively simple proof of the sharp upper estimates for the final blowup profile and for the refined space-time behavior.
We actually establish a global, upper space-time estimate, which contains those of the final and refined profiles as special cases.

\vskip 2pt
\noindent \textsc{Keywords:} semilinear heat equation, exponential nonlinearity, blowup profile, refined space-time behavior

\vskip 2pt

\noindent \textsc{AMS Classification:} 35K58, 35B44, 35B40
\end{abstract}

\maketitle

\section{Introduction and main results}

\noindent

We consider the semilinear heat equation with exponential nonlinearity.
\be{QPEp2}
\left\{ \begin{array}{lllll}
          \hfill u_t-\Delta u&=&e^u, &\qquad& x\in\Omega,\ t>0,\\
     \hfill u  &= &0,  &\qquad& x\in\partial\Omega,\ t>0,\\
          \hfill u(x,0) &=&u_0(x), &\qquad& x\in\Omega,
	\end{array} \right.
\ee
where $\Omega\subset\R^n$, $n\ge 1$, and $u_0\in L^\infty(\Omega)$, $u_0\ge 0$.
Problem \eqref{QPEp2} has a unique classical solution, of maximal existence time $T=T(u_0)\in (0,\infty]$, that we will denote by $u$ throughout this paper.
It is well known that, under suitable largeness condition on the initial data,
$u$ blows up in finite time, i.e. $T<\infty$ and 
\be{defBU}
\lim_{t\to T} \|u(t)\|_\infty=\infty.
\ee

The asymptotic behavior of blowup solutions for problem \eqref{QPEp2} has received a lot of attention
(cf., e.g.,~\cite{La, FML85, Dol, BBE87, ET87, BE89, Bress90, Bress92, BB92, HV93, LT, hv-ijm93, fp_tmj08, pulk_mmas11}, 
      and see below for more details).
The main questions are the blow-up rate, the blow-up set and the asymptotic shape of the solution.
The latter, which involves various notions of blow-up profiles (final profile, self-similar profile, refined space-time profile),
is our concern in this paper.
\smallskip

Our first main result is the following global, refined blowup estimate, valid in the scale of the original variables $(x,t)$.
We shall assume
\be{hypu0}
\begin{aligned}
&\hbox{$\Omega=\R^n$ or $\Omega=B_R$, $u_0\in L^\infty(\Omega)$, $u_0\geq 0$,}\\
&\hbox{$u_0$ radially symmetric, nonincreasing in $r=|x|$, and nonconstant.}
\end{aligned}
\ee
Under this hypothesis, $u\ge 0$ is radially symmetric decreasing in $|x|$ for all $t\in(0,T)$,
hence in particular $u(0,t)=\|u(t)\|_\infty$.
Also it is well known \cite{FML85, Chen90} that $u$ can then blow up only at $x=0$.

\begin{thm} \label{thm1}
Assume \eqref{hypu0} and $T=T(u_0)<\infty$. Then, setting $m=m(t):=u(0,t)$, we have
\be{ineq1thm1}
u(r,t)\le \log\Bigl({|\log(me^{-m}+{r^2\over 4})|\over me^{-m}+{r^2\over 4}}\Bigr)+\eps(r,t)
\quad\hbox{ in $B_\rho\times (T-\rho,T)$,}
\ee
for some $\rho>0$ small, with $\displaystyle\lim_{ (r,t)\to (0,T)}\eps(r,t)=0$.
More precisely, we may take
\be{ineq2thm1}
\eps(r,t):={C\over |\log (me^{-m}+{r^2\over 4})|}+{C\log m\over m+{e^mr^2\over 4}}
\ee
for some constant $C>0$ (depending on $u$).
\end{thm}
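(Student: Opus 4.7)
My plan is to prove the estimate via an auxiliary-function argument of Friedman--McLeod type, chosen to match the target profile. Seek $F(u)$ whose reciprocal antiderivative $H(v):=\int_v^\infty F(s)^{-1}\,ds$ is inverted asymptotically by the principal term $\log(|\log\alpha|/\alpha)$ of \eqref{ineq1thm1}. Taking $F(u)=e^u/(u-1)$ gives $H(v)=v\,e^{-v}$ and $H^{-1}(\alpha)=\log(|\log\alpha|/\alpha)+O(\log|\log\alpha|/|\log\alpha|)$ as $\alpha\to 0^+$. Accordingly, I would introduce
\[
J(r,t):=u_r(r,t)+c(t)\,r\,\frac{e^{u(r,t)}}{u(r,t)-1},\qquad c(t)\nearrow\tfrac12,
\]
on a parabolic neighborhood $B_\rho\times(T-\rho,T)$ of the blow-up point, and aim to prove $J\le 0$ by the parabolic maximum principle.

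The argument then has three steps. \emph{Preliminaries}: recall that blow-up is confined to $r=0$, the type I rate $m(t)=|\log(T-t)|+O(\log|\log(T-t)|)$, and standard interior estimates for $u$ and $u_r$ on $\{r\ge\rho/2\}$ up to $t=T$. \emph{Maximum principle}: compute $J_t-J_{rr}-\tfrac{n-1}{r}J_r$ using $u_t=\Delta u+e^u$ together with $F'=F\cdot(1-1/(u-1))$, and verify a linear parabolic inequality $J_t-\Delta J\le a(r,t)J+b(r,t)J_r$ with locally bounded coefficients. Boundary control is supplied by $J(0,t)=0$ (radial symmetry) and by the preliminary estimates on $\{r=\rho\}\cup\{t=T-\rho\}$, whence $J\le 0$. \emph{Integration and inversion}: integrating $-u_r/F(u)\ge c(t)r$ against $dH$ yields $H(u(r,t))\ge H(m)+c(t)r^2/2=me^{-m}+c(t)r^2/2$, so $u(r,t)\le H^{-1}(me^{-m}+c(t)r^2/2)$. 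Letting $c(t)\to 1/2$, the argument becomes $\alpha=me^{-m}+r^2/4$ up to a refined-rate deficit, and asymptotic inversion produces the principal term of \eqref{ineq1thm1}. The two summands of $\eps$ in \eqref{ineq2thm1} account, respectively, for the inversion remainder (first summand $C/|\log\alpha|$, after a bootstrap tightening) and for the gap between $c(t)/2$ and $1/4$ combined with the type I refinement from the preliminary step (second summand $C\log m/(m+e^m r^2/4)=C\log m/(\alpha e^m)$).

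The main obstacle is the second step: verifying the parabolic differential inequality for $J$ with the sharp constant $c\to 1/2$. Any strictly smaller $c$ would give the wrong coefficient inside the logarithm and destroy the profile's intrinsic shape, so the cancellation must be exact, relying on the specific $1/(u-1)$ correction in $F$ and on the convex structure of $e^u$; the radial-Laplacian term $(n-1)u_r/r$ near $r=0$ may force a Hopf-type boundary comparison or the insertion of a small cutoff weight. A secondary challenge is sharpening the naive inversion remainder, which is of order $\log|\log\alpha|/|\log\alpha|$, down to the $C/|\log\alpha|$ form of \eqref{ineq2thm1}; this most likely requires plugging the first bound back into a refined auxiliary function and re-running the maximum principle.
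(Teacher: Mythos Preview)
Your overall strategy (Friedman--McLeod auxiliary function, maximum principle, then integration and inversion) matches the paper's, but the specific choice $F(u)=e^u/(u-1)$, i.e.\ $\phi(u)=1/(u-1)$, is too crude to reach the stated remainder \eqref{ineq2thm1}. As you yourself observe, inverting $H(v)=ve^{-v}$ gives
\[
H^{-1}(\alpha)=-\log\alpha+\log|\log\alpha|+O\Bigl(\frac{\log|\log\alpha|}{|\log\alpha|}\Bigr),
\]
not $O(1/|\log\alpha|)$. The paper explicitly records (Remark~\ref{rem2}(i)) that the simpler choice $\phi(u)=1/(A+u)$ yields exactly this weaker remainder. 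Your proposed fix---``plug the first bound back into a refined auxiliary function and re-run the maximum principle''---is not a bootstrap mechanism that is actually available here: the differential inequality $\mathcal{P}J\le 0$ depends on a pointwise algebraic identity in $\phi$, namely $\phi'+\phi(\phi+\phi')\le 0$, and does not improve by feeding in a priori bounds on $u$. The paper's resolution is to pick the sharper
\[
\phi(s)=\frac{1}{A+s-\log(A+s)}
\]
from the outset; with this $\phi$ the antiderivative becomes $G(u)=e^{-u}(A+1+u-\log(A+u))$, whose inverse satisfies $G^{-1}(s)\le -\log s+\log|\log s|+C/|\log s|$ directly (Lemma~\ref{leminverse}), producing the first summand of $\eps$ in one pass.

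Two further points. First, the time-dependent coefficient $c(t)\nearrow\tfrac12$ and the appeal to the type~I rate $m(t)=|\log(T-t)|+O(\log|\log(T-t)|)$ are both unnecessary detours: the paper takes the constant $c(r)=r/2$ and works with $m=u(0,t)$ throughout, never invoking type~I (which, under \eqref{hypu0} alone, is not known in all dimensions). Introducing $c'(t)$ adds a term to $J_t$ that you would have to control, and the type~I estimate is an extra hypothesis not present in the theorem. Second, your attribution of the second summand $C\log m/(m+e^m r^2/4)$ to ``the gap between $c(t)/2$ and $1/4$ combined with the type~I refinement'' is off: in the paper it arises purely from the discrepancy between the exact lower bound on $G(u(r,t))$, which is $e^{-m}(m-\log(A+m))+\tfrac{r^2}{4}$, and the target quantity $me^{-m}+\tfrac{r^2}{4}$; expanding the logarithm of their ratio gives precisely $O\bigl(\log m/(m+e^m r^2/4)\bigr)$. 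Finally, note that with $\phi(u)=1/(u-1)$ you have no free parameter to force $J\le 0$ on the lateral boundary $\{r=\rho\}$; the paper's large constant $A$ in $\phi$ is what absorbs the bounded value of $e^u$ away from the blow-up point.
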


\goodbreak

The global estimate \eqref{ineq1thm1} is new, as far as we know.
As special cases, it contains sharp upper estimates for the final profile\footnote{Recall that, under the assumptions of Theorem~\ref{thm1}, since $0$ is the only blowup point (this follows from \eqref{ineq1thm1}), the space-profile 
$u(x,T)= \lim_{t\to T}u(x,t)$ exists for all $x\ne 0$ by standard parabolic estimates.} 
and for the refined space-time profile.
Namely, as a consequence of Theorem~\ref{thm1}, we obtain:

\begin{thm} \label{thm2}
Under the assumptions of Theorem~\ref{thm1}, there exist constants $C,\rho>0$ such that the following holds.
\smallskip

(i) (Final profile estimate) 
\be{ineq1thm2}
u(r,T)\le 
2|\log r|+\log |\log r|+\log 8+{C\over |\log r|}\quad\hbox{ for $r\in(0,\rho]$.}
\ee

(ii) (Refined space-time profile estimate) 
\be{ineq2thm2}
u(\xi \sqrt{me^{-m}},t)\le m-\log\Bigl(1+{\xi^2\over 4}\Bigr)+{C\log m\over m}\quad\hbox{ for $t\in[T-\tau,T)$,}
\ee
where $\tau\in(0,T)$ is uniform for $\xi\ge 0$ bounded.
\end{thm}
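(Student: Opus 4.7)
The plan is to derive both estimates directly from Theorem~\ref{thm1} by specializing $(r,t)$ in \eqref{ineq1thm1} and performing elementary asymptotic expansions of the logarithms; no new analytic input is required.

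For part~(i), I would fix $r\in(0,\rho]$ and pass to the limit $t\to T$ in \eqref{ineq1thm1}. Since $m(t)\to\infty$, we have $me^{-m}\to 0$, so the second term of $\eps(r,t)$ in \eqref{ineq2thm1} vanishes, leaving
\[
u(r,T)\le \log\Bigl(\frac{|\log(r^2/4)|}{r^2/4}\Bigr)+\frac{C}{|\log(r^2/4)|}.
\]
Writing $|\log(r^2/4)|=2|\log r|+\log 4$ for small $r$ and expanding $\log(2|\log r|+\log 4)=\log|\log r|+\log 2+O(1/|\log r|)$ then produces $2|\log r|+\log|\log r|+\log 8+O(1/|\log r|)$, which is precisely \eqref{ineq1thm2}.

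For part~(ii), I would substitute $r=\xi\sqrt{me^{-m}}$, so that $me^{-m}+r^2/4=me^{-m}(1+\xi^2/4)$ and $\log(me^{-m}(1+\xi^2/4))=-m+\log m+\log(1+\xi^2/4)$. Taking absolute values (legitimate for $m$ large with $\xi$ bounded) and applying the outer logarithm yields the main term $m-\log(1+\xi^2/4)$ plus a remainder $\log\bigl(1-(\log m+\log(1+\xi^2/4))/m\bigr)=O(\log m/m)$, uniform in $\xi$ on bounded sets. The error $\eps$ under the same substitution is bounded by $C/(m+O(\log m))+C\log m/(m(1+\xi^2/4))=O(\log m/m)$, matching \eqref{ineq2thm2}.

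The only technical point is to verify that the chosen $(r,t)$ lie in the region $B_\rho\times(T-\rho,T)$ where \eqref{ineq1thm1} applies and that $m$ is large enough for the expansions above. For (i) this is automatic once $r\le\rho$ and $t$ is close enough to $T$; for (ii), given a bound $\xi\le M$, we have $\xi\sqrt{me^{-m}}\to 0$ as $t\to T$, so any sufficiently small $\tau=\tau(M)$ works. Beyond tracking signs of logarithms and uniformity of the remainders in $\xi$, I do not anticipate any substantive obstacle; Theorem~\ref{thm1} already does all the real work.
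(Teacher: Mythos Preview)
Your proposal is correct and follows essentially the same approach as the paper: both parts are obtained by specializing \eqref{ineq1thm1}--\eqref{ineq2thm1}, letting $m\to\infty$ for~(i) and substituting $r=\xi\sqrt{me^{-m}}$ for~(ii), followed by the elementary logarithmic expansions you indicate. The only minor differences are presentational (the paper writes the expansions more tersely and explicitly fixes a bound $K$ on $\xi$ to track uniformity).
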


Let us compare Theorems~\ref{thm1}--\ref{thm2} with known results.
Let $u$ be a (not necessarily radial) blow-up solution of \eqref{QPEp2}. 
First of all, concerning the blow-up rate, we always have
$$\|u(t)\|_\infty\ge -\log(T-t)$$
(see, e.g.,~\cite{fp_tmj08}), 
and blow-up is said to be of type~I if 
\be{type1def}
\|u(t)\|_\infty\le -\log(T-t)+C.
\ee
This is the case under any of the following assumptions:
\be{type1cond1}
\hbox{$\Omega$ is bounded and $u_t\geq0$,}
\ee
\be{type1cond2}
\hbox{$\Omega=\R^n$, $u_0$ is radial decreasing and either $n=1$ or $u_t\ge 0$,}
\ee
\be{type1cond3}
\hbox{$3\le n\le 9$, $\Omega=B_R$ and $u_0$ is radial decreasing}
\ee 
(see, respectively, \cite{FML85}, \cite{HV93, BB92} and \cite{fp_tmj08}).
It seems to be an open problem whether~\eqref{QPEp2} admits some type~II blowup solutions 
(i.e.,~violating~\eqref{type1def}).

Next, concerning the final and refined blow-up profiles, it is known that, under assumption \eqref{type1cond2}, there holds
\be{QprofExp}
u(x,T)\to 2|\log |x||+\log |\log |x||+\log 8, \quad x\to 0
\ee
and
\be{QprofExp2}
u\bigl(\xi \sqrt{(T-t)|\log(T-t)|},t\bigr)+\log(T-t)\to -\log\Bigl(1+{\xi^2\over 4}\Bigr),\quad t\to T,
\ee
uniformly for $\xi$ bounded (hence in particular $\lim_{t\to T} \,[u(0,t)+\log(T-t)]=0$); 
cf.~\cite{HV93} if $n=1$ and~\cite{BB92} if $u_t\ge 0$, and see also 
 \cite[Theorem~3.30]{BE89} and \cite[Theorem~3.1]{fp_tmj08} for related results.
Moreover~\cite{Bress90, Bress92}, for any convex domain $\Omega$, there exists an open set of initial data such that
\eqref{QprofExp}-\eqref{QprofExp2} is true (after a shift of the blow-up point).
We thus see that, whereas our results make no restriction on the space dimension nor require $u_t\ge 0$,
the upper bounds in Theorem~\ref{thm2} are sharp for such solutions.

However, \eqref{QprofExp}-\eqref{QprofExp2} is not the only possible behavior and other,
more or less singular final profiles may occur.
Indeed, the following classification result is proved in \cite{HV93}: if $\Omega=\R$, $u_0\ge 0$ is nonconstant
and $u$ blows up at $(x,t)=(0,T)$,
then we have either \eqref{QprofExp} or one of the more singular final profiles
\be{QprofExpm}
 u(x,T)+m\log|x|\to C_m\quad\hbox{as}\ x\to0,
\ee
for some integer $m\geq4$.
Moreover, there exists $u_0$ such that \eqref{QprofExpm}
with $m=4$ is true, see \cite{hv-ijm93} and the references therein.
Solutions satisfying \eqref{QprofExpm} cannot be symmetric decreasing, in view of the result in the previous paragraph
or of Theorem~\ref{thm2}.
On the other hand \cite{ET87, fp_tmj08, pulk_mmas11}, there exist solutions satisfying assumption \eqref{type1cond3},
with the less singular final profile
\be{QprofExp0}
u(x,T)+2\log|x|\to C\quad\hbox{as}\ x\to0.
\ee

\begin{rem} \label{rem0}
(i) We stress that the deep results reported above concerning the blow-up profiles \eqref{QprofExp}--\eqref{QprofExp0}
 were established 
by extremely long and delicate proofs relying, among many other things, on ideas from center manifold theory
applied to the equation rewritten in similarity variables. 
Although it of course gives only the upper part of the sharp estimates, and in a rather particular radial situation,
our proof is considerably simpler and shorter,
besides leading to the new global estimate \eqref{ineq1thm1} (and having a different range of applicability).

\smallskip

(ii) Concerning the final profile, the only available estimate on the rate of convergence in~\eqref{QprofExp}, as $x\to 0$,
seems to be that for the special solutions constructed in \cite{Bress92},
where the remainder is estimated by $C|\log |x||^{-1/3}$. 
We see that, under the assumptions of Theorem~\ref{thm2} and as far as the upper estimate is concerned,
we get a more precise remainder $C|\log r|^{-1}$ in \eqref{ineq1thm2}.
\end{rem}

\section{Proof of Theorems~\ref{thm1} and \ref{thm2}}%

The proof of Theorem~\ref{thm1} is based on a suitable 
modification of the method in \cite{FML85} (see also \cite{BE89, Sou}) applying the maximum principle
to a well-chosen auxiliary functional $J$ and carefully integrating the resulting differential inequality.
Namely, we shall consider
     \be{DefJ}
     J:=u_r+{re^u\over 2\bigl(A+u-\log(A+u)\bigr)},
\ee
where $A>0$ is a sufficiently large constant.

\medskip

{\it Proof of Theorem~\ref{thm1}.}
{\bf Step 1.} {\it Basic parabolic inequality.}
 This step is well known (see \cite{FML85, Sou}). We reproduce it for completeness.
We set $f(u):=e^u$ and $R=1$ in case $\Omega=\Rn$.
Since $u\ge e^{ t\Delta}u_0$, 
 where $(e^{t\Delta})_{t\ge 0}$ denotes the Dirichlet heat semigroup on $B_R$,  there exists $\eta>0$ such that
\be{lowerboundueta}
u(x,t)\ge \eta>0\quad\hbox{in $D:=\overline B_{R/2}\times [T/2,T)$}.
\ee
Setting $\Omega_1:=\Omega\cap\{x:\,x_1>0\}$, we notice that $v:=u_{x_1}\le 0$ satisfies $v_t-\Delta v=f'(u)v\le 0$ in 
$\Omega_1\times (0,T)$.
 Therefore, $v(t)\le z(t):=e^{(t-t_0)A}v(\cdot,t_0)$ in $\Omega_1\times (t_0,T)$, where $t_0:=T/4$ and $e^{tA}$ denotes
the Dirichlet heat semigroup on $\Omega_1$.
It follows from the strong maximum principle and the Hopf Lemma, applied to $z$, that 
$v(x_1,0,\dots,0,t)\le -k x_1$ for all $(x_1,t)\in [0,R/2]\times [T/2,T)$ and some $k>0$.  
This yields
\be{FMLHopf}
u_r\le -k r\quad\hbox{ in $[0,R/2]\times [T/2,T)$.}
\ee

We next consider an auxiliary function of the form  $J:=u_r(r,t)+c(r)F(u)$,
where the functions $c$ and $F$ will be chosen below.
In $Q:=(0,R/2)\times (T/2,T)$, we compute
$$\Bigl({\partial\over\partial t}-{\partial^2\over\partial r^2}\Bigr)(cF(u))=cF'(u)(u_t-u_{rr})-cF''(u)u_r^2-2c'F'(u)u_r-c''F(u)$$
and 
$$\Bigl({\partial\over\partial t}-{\partial^2\over\partial r^2}\Bigr)u_r={n-1\over r}u_{rr}-{n-1\over r^2}u_{r}+f'(u)u_r.$$
Omitting the variables $r,t,u$ without risk of confusion, it follows that
$$J_t-J_{rr}=
{n-1\over r}u_{rr}-{n-1\over r^2}u_{r}+f'u_r
+cF'\Bigl({n-1\over r}u_{r}+f\Bigr)-cF''u_r^2-2c'F'u_r-c''F.
$$
Substituting $u_r=J-cF$ and $u_{rr}=J_r-c'F-cF'u_r=J_r-cF'J+c^2FF'-c'F$, we obtain
$$\begin{aligned}
J_t-J_{rr}
&={n-1\over r}(J_r-cF'J+c^2FF'-c'F)-{n-1\over r^2}(J-cF)+f'(J-cF) \\
&\quad+cF'\Bigl({n-1\over r}(J-cF)+f\Bigr)-cF''(J-cF)^2-2c'F'(J-cF)-c''F.
\end{aligned}$$
Setting
\be{defPJ}
\mathcal{P}J:=J_t-J_{rr}-{n-1\over r}J_{r}+bJ,\quad\hbox{ with } b:={n-1\over r^2}-f' +cF''(J -2cF)+2c'F',
\ee
it follows that
$$\begin{aligned}
\mathcal{P}J
&={n-1\over r}(c^2FF'-c'F)+{n-1\over r^2}cF-cFf'\\
&\qquad +cF'\Bigl(-{n-1\over r}cF+f\Bigr)-c^3F''F^2+2cc'FF'-c''F,
\end{aligned}$$
hence
\be{defPJ2}
\mathcal{P}J=c(F'f-Ff')+{n-1\over r^2}(c-rc')F-c^3F''F^2+2cc'FF'-c''F.
\ee

{\bf Step 2.} {\it Choice of auxiliary functions.}
Now choose $c(r)=r/2$, hence $c-rc'=c''=0$, and $F(u)=f(u)\phi(u)$,
where $f(u):=e^u$ and the function $\phi\in C^2([0,\infty))$, to be determined, 
satisfies 
     \be{QFMLJH2}
     \phi>0,\quad \phi'\le 0,\quad (f\phi)''\ge 0\quad\hbox{in $[0,\infty)$.}
\ee
We have in $Q$:
$$\begin{aligned}
c^{-1}f^{-2}\mathcal{P}J
&\le f^{-2}(F'f-Ff'+F'F) = f^{-1}(F'-F+F'\phi) \\
&=f^{-1}\bigl[f\phi'+\phi(f'\phi+f\phi')\bigr] = \phi'+\phi(\phi+\phi').
\end{aligned}$$
To guarantee 
$\mathcal{P}J\le 0$, 
we then select
     \be{choicephi}
     \phi(s)={1\over A+s-\log(A+s)},\quad s\ge 0,
     \ee
where the constant $A>1$ will be chosen below. Indeed, for all $s\ge 0$, we have $A+s-\log(A+s)\ge 1$ and 
$$\begin{aligned}{}
[\phi'+\phi(\phi&+\phi')](s) \\
&={-1+(A+s)^{-1}\over (A+s-\log(A+s))^2}\Bigl[1+{1\over A+s-\log(A+s)}\Bigr]+{1\over (A+s-\log(A+s))^2} \\
&={(A+s)^{-1}\over (A+s-\log(A+s))^2}+{-1+(A+s)^{-1}\over (A+s-\log(A+s))^3} \\
&={(A+s)^{-1}\bigl(A+s-\log(A+s)\bigr)-1+(A+s)^{-1}\over (A+s-\log(A+s))^3} \\
&={(A+s)^{-1}\bigl(1-\log(A+s)\bigr)\over (A+s-\log(A+s))^3} \le 0.
\end{aligned}$$
Moreover, an elementary computation shows that \eqref{QFMLJH2} is true for all $A>0$ sufficiently large.
On the other hand, assuming $A\ge 2$, we get 
$2(A+s-\log(A+s))\ge A$ for all $s\ge 0$ (since $z\ge 2\log z$ for all $z\ge 2$).
It thus follows from \eqref{FMLHopf} that
$$r^{-1}J\le - k+A^{-1}e^u\quad\hbox{ in }(0,R/2]\times[T/2,T).$$
Since we know that $0$ is the unique blowup point 
(see \cite{FML85} for $\Omega=B_R$ and \cite{Chen90} for $\Omega=\R^n$),
we may choose $A$ sufficiently large so that $J\le 0$ on the parabolic boundary of $Q$.
Since the coefficient $b$ in \eqref{defPJ} is bounded from below for $t$ bounded away from $T$,
we may apply the maximum principle to deduce that $J\le 0$, i.e.
\be{integr0}
        -e^{-u}(A+u-\log(A+u))u_r \ge {r\over 2}. 
\ee

In view of integrating inequality \eqref{integr0}, we give the following lemma.
 
\begin{lem}\label{leminverse}
Let $A>1$, $D=A+1-\log A>0$ and
$$G(u):=e^{-u}(A+1+u-\log(A+u)).$$
Then $G:[0,\infty)\to (0,D]$ is a decreasing bijection and there exist $C>0$, $s_0\in(0,\frac12)$ such that
\be{integr1a}
G^{-1}(s)\le H(s):=-\log s+\log|\log s|+{C\over |\log s|},\quad 0<s<s_0.
\ee
\end{lem}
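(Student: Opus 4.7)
The plan is in two parts: first verify that $G$ is a decreasing bijection onto $(0,D]$, and then extract the asymptotic upper bound on $G^{-1}$ by analyzing the logarithm of the implicit relation $s=G(u)$.

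For the bijection, I would just differentiate. A direct computation gives
$$G'(u) = -e^{-u}\Bigl(A+u-\log(A+u) + \frac{1}{A+u}\Bigr).$$
Since $A>1$ implies $A+u-\log(A+u)>0$ for all $u\ge 0$ (as $z>\log z$ on $[1,\infty)$), we have $G'(u)<0$ throughout $[0,\infty)$. Combined with $G(0)=A+1-\log A=D$ and $G(u)\sim ue^{-u}\to 0$ as $u\to\infty$, continuity gives the strictly decreasing bijection $G:[0,\infty)\to(0,D]$.

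For the inequality, the clean strategy is to pass to logarithms. Set $u:=G^{-1}(s)$ and $L:=A+1+u-\log(A+u)$, so that $s=e^{-u}L$. Taking $\log$ yields the fundamental identity
$$u = |\log s| + \log L.$$
The desired bound $u\le |\log s|+\log|\log s|+C/|\log s|$ is therefore equivalent to
$$\log L \le \log|\log s| + \frac{C}{|\log s|}\quad\hbox{for $s$ small.}$$

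The crux is to show that $L-|\log s|$ remains bounded as $s\to 0^+$. Using the identity $|\log s|=u-\log L$, one rewrites
$$L - |\log s| = A+1-\log(A+u)+\log L.$$
Since $L = u + A+1 -\log(A+u)$, both $\log L$ and $\log(A+u)$ equal $\log u + O((\log u)/u)$, so their difference tends to $0$ and $L-|\log s|\to A+1$ as $u\to\infty$. Hence there exist $M>0$ and $u_0>0$ such that $0\le L-|\log s|\le M$ whenever $u\ge u_0$. Choosing $s_0\in(0,\tfrac12)$ small enough that $G^{-1}(s_0)\ge u_0$ (possible by monotonicity), the elementary inequality $\log(1+x)\le x$ for $x\ge 0$ gives, for all $s\in(0,s_0)$,
$$\log L = \log|\log s| + \log\Bigl(1+\frac{L-|\log s|}{|\log s|}\Bigr) \le \log|\log s| + \frac{M}{|\log s|},$$
and substituting back into the fundamental identity yields $G^{-1}(s)\le H(s)$ with $C:=M$.

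The only slightly subtle point is recognizing the correct reformulation $u=|\log s|+\log L$ and the boundedness of $L-|\log s|$; once that is in hand, the rest is routine asymptotic bookkeeping, and no further analytic input (such as Taylor remainders beyond $\log(1+x)\le x$) is required.
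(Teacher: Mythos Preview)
Your proof is correct and follows essentially the same route as the paper's: both verify $G'<0$ directly and then, substituting $s=G(u)$, reduce \eqref{integr1a} to showing $H(G(u))\ge u$ for large $u$ via the relation $u=|\log s|+\log L$ with $L=A+1+u-\log(A+u)$. The only cosmetic difference is that the paper expands $H(G(u))-u$ as a full asymptotic series $(C-B)/u+O(u^{-2}(\log u)^2)$ and takes $C=2(A+1)$, whereas you isolate the bounded quantity $L-|\log s|\to A+1$ and close with $\log(1+x)\le x$; this is a slightly more economical packaging of the same computation.
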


\begin{proof} We have $G'(u):=-e^{-u}((A+u)^{-1}+A+u-\log(A+u))<0$,
hence the first assertion.
To show \eqref{integr1a}, substituting $s=G(u)$,
it is sufficient to check that, for some $C>0$, 
\be{integr1}
H(G(u))\ge u,\quad\hbox{ for all $u$ sufficiently large.}
\ee
To this end, setting $B=A+1$, we compute
$$-\log\bigl(e^{-u}(B+u-\log(A+u))\bigr)
=u-\log u-\log\bigl(1+(B-\log(A+u))u^{-1}\bigr)$$
and 
$$\begin{aligned}
\log\bigl|\log\bigl(e^{-u}(B+u-\log(A+u))\bigr)\bigr|
&=\log\bigl[u-\log(B+u-\log(A+u))\bigr]\\
&=\log u+\log\bigl[1-u^{-1}\log(B+u-\log(A+u))\bigr].
\end{aligned}$$
Therefore, for $u$ large,
$$\begin{aligned}
H(G(u)&)-u \\
&=-\log\Bigl(1+{B-\log(A+u)\over u}\Bigr)+\log\Bigl[1-{\log(B+u-\log(A+u))\over u}\Bigr]+{C\over |\log G(u)|} \\
&={\log(A+u)-B\over u}-{\log(B+u-\log(A+u))\over u}+{C\over u+O(\log u)}+O(u^{-2}|\log u|^2) \\
&={\log u-B+O(u^{-1})\over u}-{\log u+O(u^{-1}\log u))\over u}+{C\over u}+O(u^{-2}|\log u|^2) \\
&={C-B\over u}+O(u^{-2}|\log u|^2).
\end{aligned}$$
We deduce that \eqref{integr1} is true with $C=2B$.
\end{proof}

{\bf Step 3.} {\it Integration.}
Integrating \eqref{integr0} by parts, we obtain, in $Q$:
$$\begin{aligned}
 {r^2\over 4}&\le\int_{u(r,t)}^{u(0,t)} e^{-z}(A+z-\log(A+z))\, dz \\
&=\Bigl[-e^{-z}(A+z-\log(A+z))\Bigr]_{u(r,t)}^{u(0,t)}+\int_{u(r,t)}^{u(0,t)} e^{-z}(1-(A+z)^{-1})\, dz \\
&\le\Bigl[-e^{-z}(A+1+z-\log(A+z))\Bigr]_{u(r,t)}^{u(0,t)}
\end{aligned}$$
hence, recalling $m=m(t)=u(0,t)$,
$$G(u(r,t))=e^{-u}(A+1+u-\log(A+u))\ge 
s=s(r,t):=e^{-m}(m-\log(A+m))+{r^2\over 4}.$$
Now, for $(r,t)$ close to $(0,T)$, we have $s\in(0,s_0)$ owing to \eqref{defBU}, so that
Lemma~\ref{leminverse} yields
\be{integr2}
u(r,t)\le -\log s+\log|\log s|+{C\over |\log s|}.
\ee
On the other hand, for $(r,t)$ close to $(0,T)$, we have $m>\log(A+m)$ and
$$\begin{aligned}
|\log s|
&=-\log\Bigl(me^{-m}+{r^2\over 4}\Bigr)-\log\Bigl(1-{e^{-m}\log(A+m)\over me^{-m}+{r^2\over 4}}\Bigr) \\
&=-\log\Bigl(me^{-m}+{r^2\over 4}\Bigr)-\log\Bigl(1-{\log(A+m)\over m+{e^mr^2\over 4}}\Bigr) \\
&=-\log\Bigl(me^{-m}+{r^2\over 4}\Bigr)+O\biggl({\log m\over m+{e^mr^2\over 4}}\biggr)
\end{aligned}$$
and
$$\begin{aligned}
\log|\log s|
&=\log\biggl\{\Bigl|\log\Bigl(me^{-m}+{r^2\over 4}\Bigr)\Bigr|+O\biggl({\log m\over m+{e^mr^2\over 4}}\biggr)\biggr\} \\
&=\log\Bigl|\log\Bigl(me^{-m}+{r^2\over 4}\Bigr)\Bigr|+O\biggl({\log m\over (m+{e^mr^2\over 4})|\log(me^{-m}+{r^2\over 4})|}\biggr).
\end{aligned}$$
This, along with \eqref{integr2}, guarantees \eqref{ineq1thm1}-\eqref{ineq2thm1}.
\qed

\begin{proof}[Proof of Theorem~\ref{thm2}]
(i) Letting $m\to\infty$ in \eqref{ineq1thm1}-\eqref{ineq2thm1}, we get
$$
u(r,T)\le \log\Bigl({|\log({r^2\over 4})|\over {r^2\over 4}}\Bigr)+{C\over |\log ({r^2\over 4})|}
\le 2|\log r|+\log 4+\log\bigl|2\log r-\log 4\bigr|+{C\over |\log r|},$$
hence \eqref{ineq1thm2}.

\smallskip
(ii) Fix $K>0$. By \eqref{defBU}, for $\tau>0$ small (depending on $K$) we have $\log\bigl(m(1+{K^2\over 4})\bigr)<m/2$ for all $t\in[T-\tau,T)$,
with also $K\sqrt{me^{-m}}<R$ if $\Omega=B_R$.
For any $(\xi,t)\in [0,K]\times[T-\tau,T)$, letting $r=\xi \sqrt{me^{-m}}$ in \eqref{ineq1thm1}-\eqref{ineq2thm1}, we get
$$\begin{aligned}
&u\bigl(\xi \sqrt{me^{-m}},t\bigr)\\
&\le -\log\Bigl(me^{-m}\Bigl(1+{\xi^2\over 4}\Bigr)\Bigr)
+\log\Bigl|\log\Bigl(me^{-m}\Bigl(1+{\xi^2\over 4}\Bigr)\Bigr)\Bigr|+
{C\over \bigl|\log \bigl(me^{-m}\bigl(1+{\xi^2\over 4}\bigr)\bigr)\bigr|}+{C\log m\over m} \\
&\le m-\log m-\log\Bigl(1+{\xi^2\over 4}\Bigr)+\log\Bigl[m-\log\Bigl(m\Bigl(1+{\xi^2\over 4}\Bigr)\Bigr)\Bigr]
+{2C\over m} +{C\log m\over m} \\
&\le m-\log\Bigl(1+{\xi^2\over 4}\Bigr)+{\tilde C\log m\over m},
\end{aligned}$$
which is the desired conclusion.
\end{proof}

\begin{rem} \label{rem1}
(i) In the seminal work \cite{FML85}, the functional $J:=u_r+\eps re^u$ (with $\eps>0$ small) was used, instead of~\eqref{DefJ},
leading to the estimate $u(r,T)\le (2+\eta)|\log r|+C_\eta$ (for all $\eta>0$).

\smallskip

(ii) In \cite{BE89}, the functional $J:=u_r+{\eps re^u\over 2+u}$ (with $\eps>0$ small) was used
(see \cite[Corollary~3.17]{BE89}).
This gave the final profile estimate
$u(r,T)\le 2|\log r|+\log |\log r|+C$ with a large (unspecified) constant $C>0$, 
instead of the sharp constant~$\log 8$ plus a remainder term.

\smallskip

(iii) In these works, the possibility to establish sharp space-time estimates such as \eqref{ineq1thm1} and \eqref{ineq2thm2} 
by this method was not considered.
In the case of the reaction-diffusion equation with power nonlinearity,
results related to Theorems~\ref{thm1}--\ref{thm2} and obtained by a similar method
can be found in \cite{Sou} (and we refer to \cite{QSb} for more references on that problem).

\end{rem}

\begin{rem} \label{rem2}
(i) In the proof of Theorem~\ref{thm1}, the simpler choice 
$$
\phi(u)={1\over A+u},
$$
 instead of \eqref{choicephi}, i.e.~$J:=u_r+{re^u\over 2(A+u)}$ (with $A$ large),
 is also possible.
However it gives \eqref{ineq1thm1}-\eqref{ineq2thm1} with $\eps$ replaced by the less precise remainder term 
${C\log|\log (me^{-m}+{r^2\over 4})|\over |\log (me^{-m}+{r^2\over 4})|}$,
and \eqref{ineq1thm2} with ${C\log|\log r|\over |\log r|}$
instead of ${C\over |\log r|}$.
\smallskip

(ii) Instead of \eqref{choicephi},
the apparently optimal choice of $\phi$ would be the (nonexplicit) solution of the ODE $\phi'=-(1+\phi)^{-1}\phi^2$.
However one can check that this does not produce any improvement of
the remainder term in \eqref{ineq2thm1}.

\smallskip

(iii) The term $\tilde T:=-c^3F''F^2$ in \eqref{defPJ2} is not used (except for its sign) and it does not seem possible to exploit it
in order to improve the remainder term in \eqref{ineq2thm1}.
In other situations, such as the diffusion equation with fast absorption (cf.~\cite{GS05} and see also 
\cite[Section~38]{QSb}) or the parabolic-elliptic Keller-Segel problem \cite{SWi}, the term corresponding to $\tilde T$
can be used, actually in a crucial way, through a nonlocal version of the maximum principle,
but this requires to exploit the different, and rather specific structure of the equation.
\end{rem}

\goodbreak

\end{document}